\documentclass[12pt]{article}
\usepackage{amssymb}
\usepackage{amsfonts}
\usepackage{amsthm}
\usepackage{amsmath}
\usepackage{fullpage}
\usepackage{algorithm}
\usepackage{algorithmic}
\usepackage{mathtools}

\newtheorem{Theorem}{Theorem}

\newtheorem{Definition}{Definition}
\newtheorem{Corollary}{Corollary}

\usepackage[pdftex]{graphicx}
\graphicspath{{C:/Users/Sam/Desktop/PaperFigures/}}
\DeclareGraphicsExtensions{.pdf,.png}

\begin{document}
\title{On Contact Numbers of Finite Lattice Sphere Packings and the Maximal Coordination of Monatomic Crystals}
\author{Samuel Reid\thanks{University of Calgary, Centre for Computational and Discrete Geometry (Department of Mathematics \& Statistics), and Thangadurai Group (Department of Chemistry), Calgary, AB, Canada. $\mathsf{e-mail: smrei@ucalgary.ca}$}}
\maketitle

\begin{abstract}
We algorithmically solve the maximal contact number problem for finite
congruent lattice sphere packings in $\mathbb{R}^d$ and show that in $\mathbb{R}^3$ this problem is equivalent to determining the maximal
coordination of a monatomic crystal. 
\end{abstract}

\textbf{Keywords:} sphere packings, lattices, crystal chemistry, applied discrete geometry \\
\text{   \;\;       } \textbf{MSC 2010 Subject Classifications:} Primary 52C99, Secondary 92E10.

\section{Introduction}
There has been recent interest in the condensed matter physics and solid state chemistry community in 
constructing finite sphere packings with the maximal number of touching pairs (also known as the 
maximal contact number), as there are applications of small cluster geometry in nucleation, gelation, 
glass formation pathways, minimal clusters in colloids, and many other topics in soft matter physics, 
chemistry, and materials science. With this motivation, in 2011, N. Arkus, V. Manoharan, and 
M. Brenner at Harvard University provided various constructions of finite sphere packings with 
maximal contacts up to $n=20$ spheres  \cite{ArkusManharanBrenner}, and in 2014, M. Holmes-Cerfon at 
New York University provided constructions up to $n=18$ spheres \cite{HolmesCerfon} which improved 
the lower bounds from 2013 of K. Bezdek and S. Reid which are derived from the octahedral construction \cite{BezdekReid}. When 
$n=6,19,...,\frac{2k^{3} + k}{3}, k \in \mathbb{N}$ the lower bound corresponds to a fully
constructed octahedron (see Figure \ref{fig1} for $k=4$) and when $\frac{2k^{3} + k}{3} < n < \frac{2(k+1)^{3} + (k+1)}{3}$
the lower bound corresponds to a partially constructed octahedron. Recent work by K. Bezdek and M. Khan \cite{Khan} in 2016 has
reviewed the contact number problem for sphere packings and discussed explicit constructions for $n<12$ and 
the complexity of recognizing contact graphs; furthermore, the topic of totally separable sphere packings is emphasized,
as it is in \cite{BezdekSep}.
Regular totally separable sphere packings were enumerated in for $d=2,3,4$ in \cite{ReidSep} by S. Reid in 2015, which
have a chemical interpretation.

\begin{figure}[h!]\label{fig1}
\begin{center}
\includegraphics{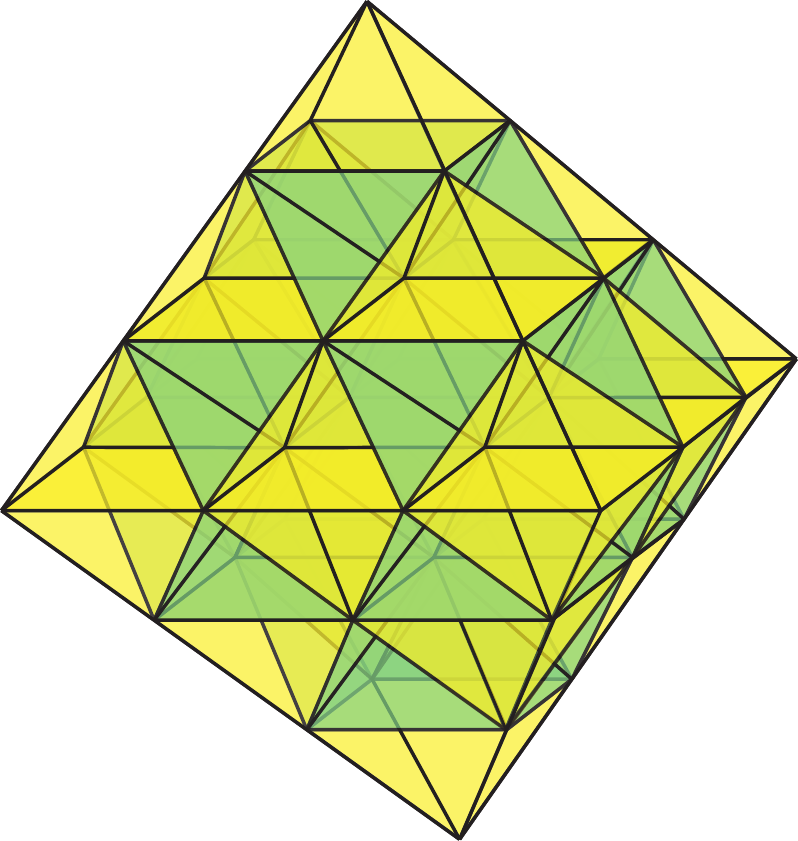}
\caption{The fourth iteration of the Octahedral Construction.}
\end{center}
\end{figure}

In light of this recent research interest at the interface of discrete geometry and materials science, we
construct a formalism for understanding all molecular geometries that translates theorems from
discrete geometry into existential bounds on the realizability and structure of chemical compounds. In particular,
we provide a comprehensive theory for understanding the structure of chemical
compounds with discrete geometry and abstract algebra using sphere packings and free $\mathbb{Z}$-modules of rank 3.
We study the case of congruent sphere packings (which are applicable for monatomic compounds) and leave the 
topic of noncongruent sphere packings (which are applicable for polyatomic compounds) for future research.

\section{Applied Discrete Geometry of Monatomic Compounds}
Traditionally, discrete geometry has been almost exclusively studied by pure mathematicians, with only
recent notice by chemists, physicists, biologists, materials scientists, computer scientists,
and other scientists. For this reason, we introduce the following definition:

\begin{Definition}[Applied Discrete Geometry]
A new interdisciplinary field of
science that describes the structure and combinatorics of matter and information with discrete geometry.
\end{Definition}

In this paper we provide some initial remarks on applied discrete geometry through crystal chemistry, e.g.,
we focus on finite congruent lattice sphere packings, rather than finite congruent sphere packings which are not based
on a lattice; these two cases correspond to monatomic crystals and amorphous monatomic compounds, respectively.

The monatomic sphere packing correspondence provides a translation between an arbitrary monatomic compound $\text{A}_{Z}$,
where $Z$ is the atomic concentration of the atom $\text{A}$, and a congruent sphere packing $\mathcal{P}_{\text{A}_{Z}}$
which encodes the relevant structural and combinatorial information about the compound.
The translation is provided by replacing atoms with spheres and replacing chemical bonds with contact points.

\begin{Definition}[Monatomic Sphere Packing Correspondence]
Let $\text{A}_{Z}$ be an arbitrary monatomic compound. Then there exists a congruent sphere
packing
$$\mathcal{P}_{\text{A}_{Z}} = \bigcup_{i=1}^{n} \left(x_{i} + r(\text{A})\mathbb{S}^2\right),$$
where $x_{i} \in \mathbb{R}^3$ is the position of the $i^{\text{th}}$ atom $\text{A}$ with radius $r(\text{A})$.
\end{Definition}

We remark that the radius $r(A)$ can be chosen to be the ionic radius of the $A$ ion with a particular coordination and charge,
with roman numeral subscripts to denote the coordination number of that atom, e.g., $r_{\text{IV}}(\text{O}^{2-})$ is
the 4-coordinated radius of the $O^{2-}$ ion.

From an arbitrary monatomic compound $\text{A}_{Z}$ there are two major cases to distinguish and treat using
the monatomic sphere packing correspondence:
\begin{enumerate}
 \item Monatomic Crystals, which correspond to congruent lattice sphere packings.
 \item Monatomic Amorphs, which correspond to congruent non-lattice sphere packings.
\end{enumerate}

A three-dimensional lattice $\Lambda$ has a basis, say $\{\omega_{1},\omega_{2},\omega_{3}\}$ where $\omega_{i} \in \mathbb{R}^3$, so that any point
on the lattice $\Lambda$ can be written as an integer linear combination of the basis elements. Then
$$\Lambda = \bigoplus_{i=1}^{3} \omega_{i}\mathbb{Z}$$
is a free $\mathbb{Z}$-module of rank 3 which is a candidate for representing the underlying structure of
a monatomic crystal $\text{A}_{Z}$ if the lattice basis is sufficiently spread out to accomodate for the size of $A$;
i.e., if $\|\omega_{i} - \omega_{j}\| \geq 2 r(A), \forall i \neq j$.

We now characterize the combinatorial features of the coordination structure associated with a monatomic compound
$\text{A}_{Z}$ in terms of the contact graph of the congruent sphere packing $\mathcal{P}_{\text{A}_{Z}}$. The
contact graph
$$G_{\mathcal{P}_{\text{A}_{Z}}} = (V,E),$$
is defined by a vertex set $V = \{x_{i} \; | \; 1 \leq i \leq Z\}$ with size $|V|=Z$ and an edge set
$$E = \{(i,j) \; | \; (x_{i} + r(A)\mathbb{S}^2) \cap (x_{j} + r(A)\mathbb{S}^2) \neq \emptyset, 1 \leq i,j \leq Z\}.$$
Then the size $|E|$ determines the number of chemical bonds $C(\text{A}_{Z})$ in the monatomic compound $\text{A}_{Z}$, and we can
use sphere packing bounds from discrete geometry to characterize this quantity in terms of $|V|=Z$.

To see how the contact graph can be used to understand the coordination structure of the crystal, we use the 
following theorem of K. Bezdek and S. Reid \cite{BezdekReid}.
\begin{Theorem}\label{bezdekreidtheorem} \text{ }
\begin{enumerate}
 \item The number of touching pairs in an arbitrary packing of $n>2$ unit balls in $\mathbb{R}^3$ is
 always less than $$6n - 0.926n^{2/3}.$$
 \item The number of touching pairs in an arbitrary lattice packing of $n>2$ unit balls in $\mathbb{R}^3$
 is always less than $$6n - \frac{3\sqrt[3]{18 \pi}}{\pi}n^{2/3} = 6n - 3.665...n^{2/3}.$$
\end{enumerate}
\end{Theorem}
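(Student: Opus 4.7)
The plan is to exploit the degree-sum identity
$$2e \;=\; \sum_{i=1}^n \deg(x_i) \;\leq\; 12 n,$$
which uses the Newton--Gregory kissing number bound $\deg(x_i) \leq 12$ in $\mathbb{R}^3$, and to extract the $\Theta(n^{2/3})$ correction from the observation that balls whose centers lie close to the boundary of the configuration are forced to have strictly fewer than $12$ contacts. The two key inputs will be an optimal packing density bound $\delta$ and the isoperimetric inequality in $\mathbb{R}^3$.

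First I would take $K = \conv\{x_1,\ldots,x_n\}$. Since the entire packing is contained in the Minkowski sum $K + B^3$ and no subregion can exceed the optimal density $\delta$, one has
$$\vol(K + B^3) \;\geq\; \frac{4\pi n}{3 \delta}.$$
For the lattice case (part (2)) one takes $\delta = \pi/\sqrt{18}$, the Lagrange--Gauss bound, which admits an elementary proof. For part (1) one uses either Hales' resolution of the Kepler conjecture or, more elementarily, the Rogers tetrahedral density bound applied to each Delaunay simplex, which accounts for the numerically weaker constant in the general case. Absorbing the lower-order Steiner terms into an error of size $O(n^{2/3})$, this gives $\vol(K) \geq \tfrac{4\pi}{3\delta} n - O(n^{2/3})$, and the isoperimetric inequality $\area(\partial K)^3 \geq 36 \pi\, \vol(K)^2$ then yields
$$\area(\partial K) \;\geq\; (36\pi)^{1/3} \Bigl(\tfrac{4\pi}{3\delta}\Bigr)^{2/3} n^{2/3} \;-\; o(n^{2/3}).$$

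The main obstacle will be converting this surface-area lower bound into a lower bound on the total deficiency $D := \sum_{i=1}^n (12 - \deg(x_i))$. My plan is to partition $\partial K$ into cells associated to boundary balls and to show that each boundary ball must leave a spherical cap of bounded angular area on its own sphere devoid of contacts, giving $D \geq c\, \area(\partial K)$ for an explicit $c$. For part (2) the rhombic-dodecahedral Voronoi decomposition of the FCC lattice makes this accounting essentially exact, and substituting $\delta = \pi/\sqrt{18}$ into the resulting chain of inequalities produces the clean constant $3\sqrt[3]{18\pi}/\pi$. For part (1) the local kissing geometry around an arbitrary ball is not rigid, so one must use a Rogers-simplex averaging argument whose extra slack produces the weaker constant $0.926$.

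Assembling everything yields $2e \leq 12 n - D < 12 n - 2 c_\delta\, n^{2/3}$, with strict inequality because a finite packing cannot simultaneously saturate the density bound (which is only attained asymptotically) and have a spherical convex hull (needed for equality in the isoperimetric inequality). Dividing by two recovers both stated bounds. I expect that the cleanest route for the arbitrary-packing case is via a Delaunay triangulation together with the Rogers simplex density, carried out with enough care to isolate the explicit numerical constant $0.926$ rather than merely the correct order $\Theta(n^{2/3})$; that constant-chasing, rather than any conceptual difficulty, is where the bulk of the work lies.
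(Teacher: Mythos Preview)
The paper does not prove Theorem~\ref{bezdekreidtheorem}; it is quoted from \cite{BezdekReid} and used only as input to Corollary~\ref{bezdekreidcorollary}, so there is no in-paper argument to compare your proposal against. Your overall strategy---a volume lower bound coming from a packing-density estimate, the isoperimetric inequality, and a count of missing contacts near the boundary---is indeed the scheme Bezdek and Reid employ in that reference, so in spirit you have correctly reconstructed the intended proof.

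Two steps of your sketch need tightening, however. First, the assertion that ``no subregion can exceed the optimal density $\delta$'' is false as written (a single ball inside its own $1$-neighbourhood already has density $1$); what one actually uses is that each Voronoi cell has volume at least $4\sqrt{2}$ in the lattice case, and that these cells lie inside $K+cB^3$ for an explicit constant $c>1$ rather than inside $K+B^3$. Second---and this is the step you yourself flag as the main obstacle---your proposed conversion $D\ge c\,\area(\partial K)$ via the convex-hull boundary is not the mechanism that succeeds: a ball's deficiency is controlled by exposed area on its own sphere (equivalently, by exposed rhombic faces of its Voronoi dodecahedron in the FCC case), and the isoperimetric inequality must be applied to the boundary of the \emph{union} of those cells or balls, not to $\partial(\conv\{x_i\})$. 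Once that substitution is made the constant $3\sqrt[3]{18\pi}/\pi$ in part~(2) drops out exactly, while part~(1) requires an additional surface-density input of Rogers type to reach the numerical constant $0.926$.
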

\noindent We can then obtain a relevant corollary regarding the maximum number of chemical bonds in a monatomic compound,
with improved bounds in the case of a monatomic crystal.
\begin{Corollary}\label{bezdekreidcorollary}
For any atom $\text{A}$, the number of chemical bonds in an amorphous $\text{A}_{Z}$ compound is always less than 
$6Z - 0.926Z^{2/3}$, and the number of chemical bonds in a crystal $\text{A}_{Z}$ compound is always less than $6Z - 3.665...Z^{2/3}$.
\end{Corollary}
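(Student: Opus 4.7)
The plan is to treat this corollary as a direct translation of Theorem \ref{bezdekreidtheorem} through the monatomic sphere packing correspondence, with very little additional content beyond a careful bookkeeping of what each side means. First I would invoke the correspondence to replace the compound $\text{A}_{Z}$ with the congruent sphere packing $\mathcal{P}_{\text{A}_{Z}} = \bigcup_{i=1}^{Z}(x_i + r(\text{A})\mathbb{S}^2)$, noting that after rescaling by the factor $1/r(\text{A})$ we may assume without loss of generality that the spheres are unit balls, since the count of touching pairs is invariant under homothety.

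Next I would identify the combinatorial quantities on both sides. By the definition of $G_{\mathcal{P}_{\text{A}_{Z}}}$, edges correspond exactly to pairs of spheres that intersect in a single point, and by the chemical interpretation of the correspondence each such edge represents one chemical bond; therefore $C(\text{A}_{Z}) = |E|$ equals the number of touching pairs in $\mathcal{P}_{\text{A}_{Z}}$, while $n = |V| = Z$.

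With this identification in hand, the amorphous case of the corollary is obtained by applying part (1) of Theorem \ref{bezdekreidtheorem} to the packing $\mathcal{P}_{\text{A}_{Z}}$, and the crystal case is obtained by applying part (2) to the (lattice) packing $\mathcal{P}_{\text{A}_{Z}}$; the hypothesis that the compound is a monatomic crystal guarantees that the centers $x_i$ lie in a three-dimensional lattice $\Lambda = \bigoplus_{i=1}^{3}\omega_i \mathbb{Z}$ with $\|\omega_i - \omega_j\| \geq 2r(\text{A})$, so the packing is genuinely a lattice packing in the sense required by the theorem.

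The only mild obstacle is handling the degenerate cases $Z \leq 2$, where Theorem \ref{bezdekreidtheorem} does not directly apply; for $Z = 1$ there are no bonds and the bounds hold trivially, and for $Z = 2$ at most one bond is possible, which is again easily checked against $6Z - 0.926 Z^{2/3}$ and $6Z - 3.665\ldots Z^{2/3}$. Beyond this edge-case check, no further work is required: the corollary is simply Theorem \ref{bezdekreidtheorem} read through the chemical dictionary supplied by the monatomic sphere packing correspondence.
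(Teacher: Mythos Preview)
Your proposal is correct and matches the paper's approach: the paper states the corollary without proof, treating it as an immediate consequence of Theorem~\ref{bezdekreidtheorem} read through the monatomic sphere packing correspondence, which is precisely what you do. Your additional remarks on rescaling by $1/r(\text{A})$ and on the degenerate cases $Z\leq 2$ go slightly beyond what the paper spells out, but they are natural and harmless elaborations of the same idea.
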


\section{The Maximal Lattice Contact Number Algorithm}
Let $\mathcal{P}_{\Lambda}$ be a packing of $n$ congruent $(d-1)$-spheres in $d$-space placed over the lattice $\Lambda$, i.e., $\mathbb{S}^{d-1} \hookrightarrow \Lambda \subset \mathbb{R}^d$.
We define the maximal contact number of $\mathcal{P}_{\Lambda}$ by
$$C_{d}(\mathcal{P}_{\Lambda},n) = \max_{\mathcal{P} \subset \mathcal{P}_{\Lambda}} \left\{ |E(\mathcal{P})| \; \big| \; |V(\mathcal{P})| = n\right\}.$$
In terms of this terminology we can restate Theorem \ref{bezdekreidtheorem}.2 as
$$C_{3}(\mathcal{P}_{\Lambda},n) < 6n - \frac{3\sqrt[3]{18 \pi}}{\pi}n^{2/3} = 6n - 3.665...n^{2/3}, \forall n > 2.$$

We now prove the following theorem which characterizes the vertex set of a lattice packing which has the maximal contact
number.

\begin{Theorem}\label{vertexcharacterization}
If $C_{d}(\mathcal{P}_{\Lambda},n) = |E(\mathcal{P})|$ then
$$V(\mathcal{P})\subseteq \left\{\sum_{i=1}^{d} \lambda_{i} \omega_{i} \; \big| \; 0 \leq \lambda_{i} \leq \lceil n/d \rceil, \forall 1 \leq i \leq d\right\},$$
where $\Lambda = \displaystyle\bigoplus_{i=1}^{d} \omega_{i} \mathbb{Z}$ is a free $\mathbb{Z}$-module of rank $d$
and $\omega_{i} \in \mathbb{R}^d, \forall 1 \leq i \leq d, \| \omega_{i} - \omega_{j} \| \geq 2, \forall i \neq j$.
\end{Theorem}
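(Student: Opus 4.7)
Take any packing $\mathcal{P}^{\ast}$ achieving $|E(\mathcal{P}^{\ast})| = C_{d}(\mathcal{P}_{\Lambda},n)$, write each vertex in lattice coordinates $v = \sum_{i=1}^{d} \lambda_{i}(v)\omega_{i}$, and translate by a lattice vector --- which preserves the contact graph --- so that $\min_{v} \lambda_{i}(v) = 0$ for every $i$. It then suffices to show $K_{i} := \max_{v} \lambda_{i}(v) \leq \lceil n/d \rceil$ for every direction $i$.

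The first reduction is connectedness of the contact graph of $\mathcal{P}^{\ast}$: otherwise a disjoint component could be translated along some $\omega_{k}$ until it first meets the rest, preserving every existing contact and creating at least one new one, contradicting optimality of $\mathcal{P}^{\ast}$. The second reduction is no gaps in projections: for each direction $i$, if some value $j_{0} \in \{1,\dots,K_{i}-1\}$ were missing from $\pi_{i}(V(\mathcal{P}^{\ast}))$, then sliding the upper slab $\{v : \lambda_{i}(v) > j_{0}\}$ by $-\omega_{i}$ is collision-free, preserves every contact, and strictly decreases $K_{i}$, so an optimal packing of smallest extent satisfies $\pi_{i}(V(\mathcal{P}^{\ast})) = \{0,1,\dots,K_{i}\}$. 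Together these reductions yield the elementary but loose bound $K_{i} + 1 \leq n$.

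The main obstacle is sharpening this to $K_{i} \leq \lceil n/d \rceil$, a factor of $d$. The plan is a simultaneous averaging argument across all $d$ coordinate directions: every vertex lies in exactly one non-empty slab per direction, so the number of non-empty slabs summed over directions is $\sum_{i}(K_{i}+1)$ while the total slab-membership count is $nd$. Combining this with an edge-count over a spanning tree of the connected contact graph --- each of whose $n-1$ edges advances the lattice coordinates by a minimal vector of $\Lambda$ with bounded $\ell^{1}$-coefficient sum in the basis $\{\omega_{i}\}$ --- should give an inequality of the shape $\sum_{i=1}^{d} K_{i} \leq n - 1$, after which a compression step that trades extent in an over-long direction for extent in a shorter direction without losing contacts would force the uniform bound $K_{i} \leq \lceil n/d \rceil$. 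The delicate technical point is running this compression rigorously when $\Lambda$ admits minimal vectors of mixed form such as $\pm(\omega_{i} - \omega_{j})$, where a single contact edge advances two lattice coordinates simultaneously; verifying that the rearrangement actually reduces the maximum extent without sacrificing contacts is where most of the care in a full proof would be concentrated.
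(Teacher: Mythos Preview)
Your core idea --- if one coordinate direction extends too far, rearrange the packing to shorten that direction at the expense of a shorter one, thereby increasing the contact number --- is exactly the paper's argument. The paper's proof, however, consists of a single short paragraph that simply \emph{asserts} such a rearrangement exists and strictly raises the contact number, with no further justification: it does not normalise by translation, does not argue connectedness or gap-closing, and does not invoke any spanning-tree or averaging bound. Your plan is therefore considerably more elaborate than what the paper actually provides; the paper treats as self-evident precisely the compression step you identify as the crux.

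One concrete concern with your refinement: the spanning-tree estimate $\sum_{i=1}^{d} K_{i} \leq n-1$ relies on each contact edge having $\ell^{1}$-coefficient sum at most $1$ in the basis $\{\omega_{i}\}$, but a minimal vector of the form $\omega_{i}-\omega_{j}$ already has coefficient sum $2$, so in such lattices the tree bound degrades to something like $\sum_{i} K_{i} \leq 2(n-1)$, and the subsequent compression must accomplish strictly more than redistribution alone. Worse, for highly anisotropic lattices in which contacts occur along only one basis direction, the optimal $n$-packing is a single row with $K_{1}=n-1$, so the bound $K_{i}\leq\lceil n/d\rceil$ can fail outright unless further hypotheses on $\Lambda$ are imposed. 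The paper does not confront any of this --- its rearrangement claim is left entirely unexamined --- so you have not missed a device present in the paper; rather, you have correctly located a genuine difficulty that the paper's proof passes over in silence.
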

\begin{proof}
Let $n \in \mathbb{N}$ and assume that $C_{d}(\mathcal{P}_{\Lambda},n) = |E(\mathcal{P})|$. Observe that $\lceil \frac{n}{d} \rceil$ is an upper bound on the necessary
number of elements in every submodule $\omega_{i} \mathbb{Z}, 1 \leq i \leq d$ required to construct
$\mathcal{P} \subset \mathcal{P}_{\Lambda}$ with $C_{d}(\mathcal{P}_{\Lambda},n) = E(\mathcal{P})$. For, if a submodule
$\omega_{j}\mathbb{Z}$ contained more than $\lceil \frac{n}{d} \rceil$ elements, the packing could be rearranged to
decrease the number of elements of $\omega_{j}\mathbb{Z}$ and
increase the number of elements of a distinct submodule $\omega_{k}\mathbb{Z}$ in a way which increases the contact number,
contradicting that $C_{d}(\mathcal{P}_{\Lambda},n) = |E(\mathcal{P})|$.
\end{proof}

\begin{Corollary}
If $C_{3}(\mathcal{P}_{\Lambda},n) = |E(\mathcal{P})|$ then
$$V(\mathcal{P})\subseteq \left\{\lambda_{1}\omega_{1} + \lambda_{2}\omega_{2} + \lambda_{3}\omega_{3} \; \big| \; 0 \leq \lambda_{i} \leq \lceil n/3 \rceil, \forall 1 \leq i \leq 3\right\},$$
where $\Lambda = \omega_{1}\mathbb{Z} \oplus \omega_{2}\mathbb{Z} \oplus \omega_{3}\mathbb{Z}$ is
a free $\mathbb{Z}$-module of rank 3 with $\| \omega_{i} - \omega_{j} \| \geq 2, \forall i \neq j$,
$\omega_{1},\omega_{2},\omega_{3} \in \mathbb{R}^3$.
\end{Corollary}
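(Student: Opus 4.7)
The plan is to derive the corollary as an immediate specialization of Theorem \ref{vertexcharacterization} with $d = 3$. The hypotheses match exactly: the corollary's conditions $\|\omega_i - \omega_j\| \geq 2$ for $i \neq j$ and $\omega_1, \omega_2, \omega_3 \in \mathbb{R}^3$ are the three-dimensional instance of those in the theorem, and the lattice decomposition $\Lambda = \omega_1 \mathbb{Z} \oplus \omega_2 \mathbb{Z} \oplus \omega_3 \mathbb{Z}$ is just $\bigoplus_{i=1}^{d} \omega_i \mathbb{Z}$ at $d = 3$.

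First I would assume $C_3(\mathcal{P}_\Lambda, n) = |E(\mathcal{P})|$ and invoke Theorem \ref{vertexcharacterization} to obtain
\[ V(\mathcal{P}) \subseteq \left\{ \sum_{i=1}^{3} \lambda_i \omega_i \;\Big|\; 0 \leq \lambda_i \leq \lceil n/3 \rceil,\ 1 \leq i \leq 3 \right\}. \]
Then I would simply rewrite the sum $\sum_{i=1}^{3} \lambda_i \omega_i$ in its expanded form $\lambda_1 \omega_1 + \lambda_2 \omega_2 + \lambda_3 \omega_3$ to arrive at the statement of the corollary verbatim.

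The main obstacle is essentially nonexistent, since all the combinatorial content, in particular the pigeonhole-style rearrangement argument bounding the number of elements contributed by each submodule $\omega_j \mathbb{Z}$ by $\lceil n/d \rceil$, has already been carried out in the proof of Theorem \ref{vertexcharacterization}. The only reason to record the corollary separately is to highlight the physically relevant case $d = 3$, which is the setting of the monatomic sphere packing correspondence and the intended input to the algorithmic framework developed in the rest of the section.
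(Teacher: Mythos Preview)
Your proposal is correct and matches the paper's own treatment: the corollary is stated without proof in the paper, as it is nothing more than the specialization of Theorem~\ref{vertexcharacterization} to $d=3$, exactly as you describe.
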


We now use the above corollary to write an algorithm for obtaining the value of $C_{3}(\mathcal{P}_{\Lambda},n)$
over any three-dimensional lattice $\Lambda$. Chemically, this algorithm tells us the maximum number 
of chemical bonds between any crystalline formation of $n$ atoms of the same size.

\begin{algorithm}
\caption{Maximal Coordination of Monatomic Crystals}
\begin{algorithmic}
\REQUIRE $\Lambda = \omega_{1}\mathbb{Z} \oplus \omega_{2}\mathbb{Z} \oplus \omega_{3}\mathbb{Z}$, where $\| \omega_{i} - \omega_{j} \| \geq 2r(A), \forall i \neq j, \omega_{1},\omega_{2},\omega_{3} \in \mathbb{R}^3$, $n \in \mathbb{N}$.
\STATE Set $\mathcal{P}_{\Lambda} = \varnothing$ and $k=\lceil n/3 \rceil$.
\FORALL{$\lambda_{1} \in \{0,1,...,k\}$}
\FORALL{$\lambda_{2} \in \{0,1,...,k\}$}
\FORALL{$\lambda_{3} \in \{0,1,...,k\}$}
\STATE Set $\mathcal{P}(\lambda_{1},\lambda_{2},\lambda_{3}) = \lambda_{1}\omega_{1} + \lambda_{2}\omega_{2} + \lambda_{3}\omega_{3} + r(A)\mathbb{S}^2$.
\STATE Set $\mathcal{P}_{\Lambda} = \mathcal{P}_{\Lambda} \cup \mathcal{P}(\lambda_{1},\lambda_{2},\lambda_{3})$.
\ENDFOR
\ENDFOR
\ENDFOR
\STATE Set $C_{3}(\mathcal{P}_{\Lambda},n)=n$.
\FORALL{$S \subseteq \mathfrak{P}(\mathcal{P}_{\Lambda})$}
\IF{$|S|=n$}
\IF{$|E(S)| > C_{3}(\mathcal{P}_{\Lambda},n)$}
\STATE Set $C_{3}(\mathcal{P}_{\Lambda},n)=|E(S)|$.
\ENDIF
\ENDIF
\ENDFOR
\RETURN $C_{3}(\mathcal{P}_{\Lambda},n)$.
\end{algorithmic}
\end{algorithm}

\pagebreak

\end{document}